\title[]{On the asymptotic behavior of the contaminated sample mean}
\author{Ben Berckmoes}
\author{Geert Molenberghs}
\subjclass[2000]{}
\keywords{approximate central limit theory, asymptotic normality, consistency, contaminated data, Kolmogorov distance, Lindeberg index, sample mean}
\thanks{Ben Berckmoes is post doctoral fellow at the Fund for Scientific Research of Flanders (FWO)}
\thanks{Geert Molenberghs gratefully acknowledges financial support from the IAP research network  \#P7/06 of the Belgian Government (Belgian Science Policy)}
\DeclareMathOperator*{\Var}{\textrm{Var}}
\begin{document}

\maketitle

\newtheorem{pro}{Proposition}[section]
\newtheorem{lem}[pro]{Lemma}
\newtheorem{thm}[pro]{Theorem}
\newtheorem{de}[pro]{Definition}
\newtheorem{co}[pro]{Comment}
\newtheorem{no}[pro]{Notation}
\newtheorem{vb}{Example}
\newtheorem{vbn}[pro]{Examples}
\newtheorem{gev}[pro]{Corollary}
\newtheorem{rem}[pro]{Remark}

\begin{abstract}
An observation of a cumulative distribution function $F$ with finite variance is said to be contaminated according to the inflated variance model if it has a large probability of coming from the original target distribution $F$, but a small probability of coming from a contaminating distribution that has the same mean and shape as
$F$, though a larger variance. It is well known that in the presence of data contamination, the ordinary sample mean looses many of its good properties, making it preferable to use more robust estimators. It is insightful to see to what extent an intuitive estimator such as the sample mean becomes less favorable in a contaminated setting. In this paper, we investigate under which conditions the sample mean, based on a finite number of independent
observations of $F$ which are contaminated according to the inflated variance model, is a valid estimator for the mean of $F$. In particular, we examine to what extent this estimator is weakly consistent for the mean of $F$ and asymptotically normal. As classical central limit theory is generally inaccurate to cope with the asymptotic normality in this setting, we invoke more general approximate central limit theory as developed by Berckmoes, Lowen, and Van Casteren (2013). Our theoretical results are
illustrated by a specific example and a simulation study.
\end{abstract}

\section{Introduction}

Suppose that we are given a finite number of independent observations $X_1,\ldots,X_n$ of a cumulative distribution $F$ on the real line with mean $\mu$ and finite variance. It is well known that the sample mean $\overline{X}_n = \frac{1}{n}\sum_{k=1}^n X_k$ is an accurate estimator in the sense that it is consistent for $\mu$, asymptotically normal, and under broad conditions also efficient.

Now assume that there is an underlying mechanism causing each observation to be contaminated according to the inflated variance model (Titterington, Smith, and Makov 1985:108). That is, instead of each $X_k$ having distribution $F$, there is a large probability that $X_k$ comes from the distribution $F$, but a small probability
that it comes from a contaminating distribution $F(\frac{\cdot}{\sigma_k})$, $\sigma_k \geq \text{\upshape Var}(F)$, which has the same shape and mean as $F$, though a larger variance. It is well known that in this contaminated setting, the ordinary sample mean becomes less reliable, and more robust estimators should be used (Huber and Ronchetti 2009). It is insightful to show to what extent the good properties of the sample mean are lost in this setting.

In this paper, we investigate under what conditions the sample mean in this contaminated setting remains (weakly) consistent for $\mu$ and asymptotically normal. It will turn out that the weak consistency can be easily established under a fairly weak condition using Chebyshev's inequality (Theorem \ref{thm:WeakCon}) and that the asymptotic normality can be established using classical central limit theory if the sequence of contaminating variances $(\sigma_k)_k$ can be controlled sufficiently (Theorem \ref{thm:CEst}).

However, if no sufficient control over the sequence $(\sigma_k)_k$ is possible, we end up with an interesting class of settings in which classical central limit theory is inaccurate to describe the asymptotic behavior of the sample mean. This is due to the fact that in this case the question of whether the sample mean is asymptotically normal should be answered, rather than in a dichotomous (yes/no) way, in a continuous fashion. Instead we will use the approximate generalization of classical central limit theory, as developed by Berckmoes et al. (2013), to produce a number between $0$ and $1$, which is interpretable as an upper bound for a canonical index measuring how far the sample mean deviates from being asymptotically normal (Theorem \ref{thm:QEst}). We thus get a gradation in which we are closer to asymptotic normality in some cases, and further away from it in others. This will be made visible by QQ-plots coming from a simulation study

The paper is structured as follows. In section 2 the formal framework in which we will work and the notation
we will use is sketched. The key results of approximate central limit theory developed by Berckmoes et al. (2013) are explained in section 3. Section 4 is the core of this paper. Here we give the theoretical results concerning the asymptotic properties of the sample mean in the contaminated setting. A specific example and a simulation study are given in section 5. Finally, in section 6 we formulate some open questions for further research.

\section{Formal framework}\label{sec:intro}

Let $F$ be a cumulative distribution function on the real line with
\begin{equation*}
\int_{-\infty}^\infty x dF(x) = 0
\end{equation*}
and
\begin{equation*}
\int_{- \infty}^\infty x^2 dF(x) = 1.
\end{equation*}
Fix $\mu \in \mathbb{R}$ and  let $X_1$, $X_2$, $\ldots$, $X_k$, $\ldots$ be independent observations of $F(\cdot - \mu)$ which are contaminated according to the inflated variance model (Titterington et al. 1985:108), that is
\begin{equation*}
X_k \sim (1 - p_k) F(\cdot - \mu) + p_k F\left(\frac{\cdot - \mu}{\sigma_k}\right),
\end{equation*}
where $p_k \in \left[0,1\right]$ and $\sigma_k \in \left[1, \infty\right[$. Observe that
\begin{equation*}
\mathbb{E}[X_k] = \mu
\end{equation*}
and
\begin{equation*}
\Var[X_k] = (1 - p_k) + p_k \sigma_k^2.
\end{equation*}
Now define the sample mean in the usual way as
\begin{equation*}
\overline{X}_n = \frac{1}{n} \sum_{k=1}^n X_k.
\end{equation*}
Notice that 
\begin{equation*}
\mathbb{E}[\overline{X}_n] = \mu
\end{equation*}
and
\begin{equation*}
\Var[\overline{X}_n] = \left(\frac{s_n}{n}\right)^2,
\end{equation*} 
where
\begin{equation*}
s_n^2 = \sum_{k=1}^n [(1-p_k) + p_k \sigma_k^2].
\end{equation*}
Also,
\begin{equation}
s_n^2 \geq n,\label{eq:sngeqn}
\end{equation}
because $\sigma_k^2 \geq 1$ for all $k$. In this paper, we investigate to what extent the estimator $\overline{X}_n$ is weakly consistent for $\mu$ in the sense that
\begin{equation}
\overline{X}_n \stackrel{\mathbb{P}}{\rightarrow} \mu\label{WeakCon}
\end{equation}
and asymptotically normal in the sense that
\begin{equation}
\frac{n}{s_n} \left(\overline{X}_n - \mu\right)  \stackrel{w}{\rightarrow} N(0,1).\label{AsymNor}
\end{equation}
Notice that in the uncontaminated case where $\sigma_k = 1$ for all $k$, the weak law of large numbers implies the truth of (\ref{WeakCon}) and the central limit theorem implies the validity of (\ref{AsymNor}). For our study of the asymptotic normality, we fall back on approximate central limit theory as developed by Berckmoes et al. (2013). We briefly recall the basics of this theory in the next section. 

\section{Approximate central limit theory}

By a standard triangular array (STA)  we mean a triangular array of real square integrable random variables 
\begin{displaymath}
\begin{array}{cccc} 
\xi_{1,1} &  &  \\
\xi_{2,1} & \xi_{2,2} & \\
\xi_{3,1} & \xi_{3,2} & \xi_{3,3} \\
 & \vdots &
\end{array}
 \end{displaymath}
with the following properties:
\begin{enumerate}
 \item $\forall n : \xi_{n,1}, \ldots, \xi_{n,n} \textrm{ are independent,}$
 \item $\forall n, k : \mathbb{E}\left[\xi_{n,k}\right] = 0,$
 \item $\forall n : \sum_{k=1}^{n} \mathbb{E}\left[\xi_{n,k}^2\right] = 1$.
\end{enumerate}

We say that an STA $\{\xi_{n,k}\}$ satisfies Lindeberg's condition iff 
\begin{equation*}
\forall \epsilon > 0 : \lim_{n \rightarrow \infty} \sum_{k=1}^n \mathbb{E}\left[\xi_{n,k}^2 ; \left|\xi_{n,k}\right| \geq \epsilon\right] = 0
\end{equation*}
and that it satisfies Feller's condition iff
\begin{equation*}
\lim_{n \rightarrow \infty} \max_{k=1}^n \mathbb{E}\left[\xi_{n,k}^2\right] = 0.
\end{equation*}
It is well known (and readily verified) that Lindeberg's condition is strictly stronger than Feller's condition. 

The importance of the notions explained above is reflected by the following key result in classical central limit theory (Feller 1971).

\begin{thm}[Central Limit Theorem (CLT)]\label{CLT}
Consider, for $\xi \sim N(0,1)$ and $\{\xi_{n,k}\}$ an STA, the following assertions:
\begin{enumerate}
\item The weak convergence relation $\sum_{k=1}^n \xi_{n,k} \stackrel{w}{\rightarrow} \xi$ holds.
\item The STA $\{\xi_{n,k}\}$ satisfies Lindeberg's condition.
\end{enumerate}
Then assertion (2) implies assertion (1) and both assertions are equivalent if $\{\xi_{n,k}\}$ satisfies Feller's condition.
\end{thm}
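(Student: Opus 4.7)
The plan is to prove both directions of this Lindeberg--Feller statement via the classical route of characteristic functions, as in Feller (1971).

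For the implication $(2) \Rightarrow (1)$, I would first use row-independence to factor $\phi_{S_n}(t) = \prod_{k=1}^n \phi_{n,k}(t)$, where $S_n = \sum_{k=1}^n \xi_{n,k}$ and $\phi_{n,k}$ is the characteristic function of $\xi_{n,k}$, with the aim of showing $\phi_{S_n}(t) \to e^{-t^2/2}$ pointwise in $t$. Starting from the elementary inequality $|e^{ix} - 1 - ix + x^2/2| \leq \min(x^2, |x|^3/6)$ and splitting the expectation at level $\epsilon$, one obtains
\begin{equation*}
\left| \phi_{n,k}(t) - 1 + \tfrac{t^2}{2}\mathbb{E}[\xi_{n,k}^2]\right| \leq t^2\, \mathbb{E}[\xi_{n,k}^2 ; |\xi_{n,k}| \geq \epsilon] + \tfrac{\epsilon|t|^3}{6}\,\mathbb{E}[\xi_{n,k}^2].
\end{equation*}
Summing in $k$, applying the Lindeberg condition to the first term and $\sum_k \mathbb{E}[\xi_{n,k}^2] = 1$ to the second, the right-hand side is made arbitrarily small by first letting $n \to \infty$ at fixed $\epsilon$ and then letting $\epsilon \to 0$. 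I would then compare the product $\prod_k \phi_{n,k}(t)$ with $\prod_k \exp(-t^2 \mathbb{E}[\xi_{n,k}^2]/2) = e^{-t^2/2}$ via the telescoping bound $|\prod_k z_k - \prod_k w_k| \leq \sum_k |z_k - w_k|$ valid for complex numbers of modulus at most $1$, noting that Lindeberg implies Feller, so all factors are eventually close to $1$ and the bound applies.

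For the converse $(1) \Rightarrow (2)$ under Feller's condition, I would exploit the uniform smallness afforded by Feller to replace $\log \phi_{n,k}(t)$ by $\phi_{n,k}(t) - 1$ modulo a vanishing error, so that the weak convergence $\phi_{S_n}(t) \to e^{-t^2/2}$ translates into $\sum_{k=1}^n (1 - \phi_{n,k}(t)) \to t^2/2$ for every $t$. Taking real parts and writing $\mathrm{Re}(1 - \phi_{n,k}(t)) = \mathbb{E}[1 - \cos(t\xi_{n,k})]$, I would split each expectation at $\{|\xi_{n,k}| \geq \epsilon\}$. On the complementary set the inequality $1 - \cos(tx) \leq t^2 x^2/2$ contributes at most $(t^2/2)\sum_k \mathbb{E}[\xi_{n,k}^2 ; |\xi_{n,k}| < \epsilon]$, while on the tail set the trivial bound $1 - \cos(tx) \leq 2$ applies. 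Matching the total against the limit $t^2/2$ and choosing $t$ appropriately relative to $\epsilon$, one concludes that the Lindeberg tail $\sum_k \mathbb{E}[\xi_{n,k}^2 ; |\xi_{n,k}| \geq \epsilon]$ must vanish in the limit.

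The main obstacle lies in this converse direction: while the forward implication is essentially a careful Taylor expansion combined with truncation bookkeeping, extracting the Lindeberg tail condition from the combination of weak convergence and the mild Feller hypothesis requires a delicate dissection of the characteristic functions and a careful coupling between the truncation level $\epsilon$ and the test parameter $t$.
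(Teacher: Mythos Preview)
The paper does not supply its own proof of this theorem; it is quoted as a classical result with a reference to Feller (1971). Your characteristic-function sketch is precisely the standard argument found there, and the outline is correct for both directions, so there is nothing substantive to compare.
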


Now suppose that we are given an STA $\{\xi_{n,k}\}$ which satisfies Feller's condition, but fails to satisfy Lindeberg's condition. Then we infer from classical central limit theory (Theorem \ref{CLT}) that the row-wise sums of $\{\xi_{n,k}\}$ fail to be asymptotically normal. However, inspired by approach theory, a topological theory pioneered by Lowen (2015) (the details of which are not needed for a proper understanding of this paper), we could ask the following question. How far does $\sum_{k=1}^n \xi_{n,k}$ deviate from $\xi$ if $n$ gets large?

In order to formalize this question, recall that the Kolmogorov distance between random variables $\eta$ and $\eta^\prime$ is given by
\begin{equation}
K(\eta,\eta^\prime) = \sup_{x \in \mathbb{R}} \left|\mathbb{P}[\eta \leq x] - \mathbb{P}[\eta^\prime \leq x]\right|.\label{def:Kol}
\end{equation}
It is well known that for a continuously distributed random variable $\eta$ and an arbitrary sequence of random variables $(\eta_n)_n$ the following are equivalent:
\begin{enumerate}
	\item $\eta_n \stackrel{w}{\rightarrow} \eta,$
	\item $K(\eta,\eta_n) \rightarrow 0.$
\end{enumerate}
Thus, even for an STA $\{\xi_{n,k}\}$ for which the sequence $\left(\sum_{k=1}^n \xi_{n,k}\right)_n$ fails to converge weakly to $\xi$, it still makes sense to consider the number 
\begin{equation}
\limsup_{n \rightarrow \infty} K\left(\xi, \sum_{k=1}^n \xi_{n,k}\right),\label{def:limop}
\end{equation}
which takes values between 0 and 1 and measures in a precise sense how far the sequence of row-wise sums deviates from being asymptotically normal. In the language of approach theory, expression (\ref{def:limop}) is referred to as a limit operator or an index of convergence (Berckmoes, Lowen, and Van Casteren 2011; Lowen 2015). Notice that $\limsup_{n \rightarrow \infty} K\left(\xi, \sum_{k=1}^n \xi_{n,k}\right) = 0$ if and only if $\sum_{k=1}^n \xi_{n,k} \stackrel{w}{\rightarrow} \xi$.

For an arbitrary STA $\{\xi_{n,k}\}$ it also makes sense to introduce the number
\begin{equation}
\textrm{\upshape{Lin}}\left(\{\xi_{n,k}\}\right) = \sup_{\epsilon > 0} \limsup_{n \rightarrow \infty} \sum_{k = 1}^n \mathbb{E}\left[\xi_{n,k}^2 ; \left|\xi_{n,k}\right| \geq \epsilon\right],\label{def:Lin}
\end{equation}
which lies between 0 and 1 and is a canonical index which measures how far $\{\xi_{n,k}\}$ deviates from satisfying Lindeberg's condition. Expression (\ref{def:Lin}) is called the Lindeberg index. Observe that $\textrm{\upshape{Lin}}(\{\xi_{n,k}\}) = 0$ if and only if $\{\xi_{n,k}\}$ satisfies Lindeberg's condition.

The following result, which connects the expressions (\ref{def:limop}) and (\ref{def:Lin}) for an arbitrary STA satisfying Feller's condition, lies at the heart of approximate central limit theory developed by Berckmoes et al. (2013). The proof relies on Stein's method (Barbour and Chen 2005).

\begin{thm}[Approximate Central Limit Theorem (ACLT)]\label{QCLT}
Consider $\xi \sim N(0,1)$ and $\{\xi_{n,k}\}$ an STA which satisfies Feller's condition. Then 
\begin{equation*}
\limsup_{n \rightarrow \infty} K\left(\xi,\sum_{k=1}^n \xi_{n,k}\right) \leq \textrm{\upshape{Lin}}\left(\{\xi_{n,k}\}\right).
\end{equation*}
\end{thm}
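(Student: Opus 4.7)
The plan is to invoke Stein's method for normal approximation with discontinuous test functions, as surveyed in Barbour and Chen (2005). For each $x \in \mathbb{R}$ let $f_x$ be the solution of the Stein equation $f_x'(w) - w f_x(w) = \mathbf{1}_{(-\infty,x]}(w) - \mathbb{P}[\xi \leq x]$; classical estimates yield universal bounds on $\|f_x\|_\infty$ and $\|f_x'\|_\infty$ together with a quantitative modulus of continuity for $f_x'$. Writing $W_n = \sum_{k=1}^n \xi_{n,k}$ and $W_n^{(k)} = W_n - \xi_{n,k}$, the independence of the row entries combined with $\sum_k \mathbb{E}[\xi_{n,k}^2] = 1$ allows the Stein identity applied to $W_n$ to be rewritten, via a leave-one-out argument, as a sum over $k$ of expectations of second-order remainders in the Taylor expansion of $f_x$ around $W_n^{(k)}$.

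The heart of the argument will be to control these remainders uniformly in $x$. I would fix $\epsilon > 0$ and split each summand according to whether $|\xi_{n,k}| \geq \epsilon$ or $|\xi_{n,k}| < \epsilon$. On the large-deviation event, the crude bound coming from $\|f_x'\|_\infty$ produces a contribution of the form $\sum_k \mathbb{E}[\xi_{n,k}^2 ; |\xi_{n,k}| \geq \epsilon]$. On the small-deviation event, the modulus of continuity of $f_x'$, together with conditioning on $W_n^{(k)}$ and the normalization $\sum_k \mathbb{E}[\xi_{n,k}^2] = 1$, yields a contribution of order $\epsilon$, plus a lower-order remainder controlled by $(\max_{k \leq n} \mathbb{E}[\xi_{n,k}^2])^{1/2}$. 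Taking $\sup_{x \in \mathbb{R}}$ converts the pointwise estimate into a bound on $K(\xi, W_n)$, and Feller's condition then forces the $\max_k$ remainder to vanish upon taking $\limsup_{n \to \infty}$.

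The conclusion will then follow by two successive passages to the limit. First, at fixed $\epsilon > 0$, the preceding estimate should yield
\begin{equation*}
\limsup_{n \to \infty} K\bigl(\xi, W_n\bigr) \leq \limsup_{n \to \infty} \sum_{k=1}^n \mathbb{E}\bigl[\xi_{n,k}^2 ; |\xi_{n,k}| \geq \epsilon\bigr] + O(\epsilon) \leq \textrm{\upshape{Lin}}\bigl(\{\xi_{n,k}\}\bigr) + O(\epsilon),
\end{equation*}
the last inequality being nothing but the definition of the Lindeberg index as a supremum over $\epsilon > 0$. Letting $\epsilon \to 0^+$ then produces the stated bound. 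The principal technical obstacle is organizing the Stein remainder so that it factors cleanly as a Lindeberg sum plus vanishing terms, with the coefficient in front of the Lindeberg sum being exactly $1$ rather than a universal constant; this demands sharp estimates on the Stein solution $f_x$ tailored to indicator test functions, plus careful bookkeeping on the small-deviation event so that the contribution from $|\xi_{n,k}| < \epsilon$ is absorbed into the $O(\epsilon)$ term and the $\max_k$ remainder.
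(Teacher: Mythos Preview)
The paper does not actually prove this theorem: it is quoted from Berckmoes, Lowen, and Van Casteren (2013), with only the remark that ``the proof relies on Stein's method (Barbour and Chen 2005).'' So there is no in-paper proof to compare your proposal against; what you have sketched is precisely the route the paper alludes to, and your outline of the leave-one-out Stein decomposition followed by an $\epsilon$-truncation is the standard architecture for such results.

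Your proposal is honest about the one real difficulty, and it is worth emphasizing: off-the-shelf Stein bounds for indicator test functions (e.g.\ those in Barbour and Chen) typically deliver an inequality of the shape
\[
K(\xi,W_n) \leq C_1 \sum_{k} \mathbb{E}\bigl[\xi_{n,k}^2;\,|\xi_{n,k}|\geq\epsilon\bigr] + C_2\,\epsilon + C_3 \Bigl(\max_k \mathbb{E}[\xi_{n,k}^2]\Bigr)^{1/2}
\]
with some universal $C_1>1$. That would only yield $\limsup_n K(\xi,W_n)\leq C_1\cdot\textrm{Lin}(\{\xi_{n,k}\})$, which is strictly weaker than the statement. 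Getting $C_1=1$ is the whole content of the cited 2013 paper and requires a more delicate treatment of the Stein remainder than the generic $\|f_x'\|_\infty$ bound you invoke on the large-deviation piece; in particular one cannot simply bound the second-order Taylor remainder by $2\|f_x'\|_\infty\,\xi_{n,k}^2$ on $\{|\xi_{n,k}|\geq\epsilon\}$ and hope to recover constant $1$. You have flagged this as ``the principal technical obstacle,'' which is accurate, but your sketch does not indicate how you would overcome it. To turn the proposal into a proof you would need to consult the sharp estimates in Berckmoes et al.\ (2013) or reproduce an argument that extracts the factor $1$ directly---for instance by exploiting the specific form of the Stein solution's derivative rather than just its sup-norm.
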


Notice that the ACLT is a generalization of the CLT which has the advantage that it can cope with STA's which fail to satisfy Lindeberg's condition. The ACLT heuristically states that if an STA is close to satisfying Lindeberg's condition, then its row-wise sums are close to being asymptotically normally distributed. 

We make use of approximate central limit theory in the next section, where we study the asymptotic behavior of the contaminated sample mean as introduced in the previous section.

\section{Consistency and asymptotic normality}

We keep the terminology and the notation from above.

The following relatively straightforward result shows that the contaminated sample mean is weakly consistent under a fairly mild condition. 

\begin{thm}\label{thm:WeakCon}
Suppose that 
\begin{equation}\label{ConA}
\lim_{n \rightarrow \infty} \frac{1}{n^2} \sum_{k = 1}^n p_k \sigma_k^2  = 0.
\end{equation}
Then
\begin{equation*}
\overline{X}_n \stackrel{\mathbb{P}}{\rightarrow} \mu.
\end{equation*}
\end{thm}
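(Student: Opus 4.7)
The plan is to establish the result by a direct application of Chebyshev's inequality, exploiting the fact that the sample mean is already centered at $\mu$. Since $\mathbb{E}[\overline{X}_n] = \mu$, Chebyshev's inequality gives, for every $\epsilon > 0$,
\begin{equation*}
\mathbb{P}\left[\left|\overline{X}_n - \mu\right| \geq \epsilon\right] \leq \frac{\Var[\overline{X}_n]}{\epsilon^2} = \frac{1}{n^2 \epsilon^2} \sum_{k=1}^n \left[(1 - p_k) + p_k \sigma_k^2\right].
\end{equation*}
Hence it suffices to prove that $\Var[\overline{X}_n] \to 0$ as $n \to \infty$.

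To this end, I would simply split the sum into its two natural pieces. The first piece satisfies
\begin{equation*}
\frac{1}{n^2} \sum_{k=1}^n (1 - p_k) \leq \frac{1}{n^2} \cdot n = \frac{1}{n} \underset{n \to \infty}{\longrightarrow} 0,
\end{equation*}
since $1 - p_k \leq 1$ for each $k$. The second piece,
\begin{equation*}
\frac{1}{n^2} \sum_{k=1}^n p_k \sigma_k^2,
\end{equation*}
tends to $0$ by the hypothesis (\ref{ConA}). Adding these two bounds yields $\Var[\overline{X}_n] \to 0$, and plugging this into the Chebyshev bound completes the proof.

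There is no real obstacle: the only mild subtlety is recognizing that the contribution from the uncontaminated variance component is automatically $O(1/n)$ and therefore needs no separate assumption, so that the hypothesis (\ref{ConA}) only has to control the contaminating part $\frac{1}{n^2}\sum_{k=1}^n p_k \sigma_k^2$. This is precisely why (\ref{ConA}) is a fairly weak sufficient condition, as announced in the introduction.
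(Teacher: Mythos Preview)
Your proof is correct and essentially identical to the paper's own argument: both apply Chebyshev's inequality and split $\Var[\overline{X}_n]$ into the $(1-p_k)$-part, which is trivially $O(1/n)$, and the $p_k\sigma_k^2$-part, which vanishes by hypothesis (\ref{ConA}).
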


\begin{proof}
Assume without loss of generality that $\mu = 0$. For $\epsilon > 0$, Chebyshev's inequality gives
\begin{eqnarray*}
\mathbb{P}\left[\left|\overline{X}_n\right| \geq \epsilon\right] &\leq& \frac{1}{\epsilon^2} \Var[\overline{X}_n]\\
&=& \frac{1}{\epsilon^2}\left[\frac{1}{n^2} \sum_{k = 1}^n (1 - p_k) + \frac{1}{n^2} \sum_{k = 1}^n p_k \sigma_k^2\right],
\end{eqnarray*}
which easily implies that
\begin{displaymath}
\limsup_{n \rightarrow \infty} \mathbb{P}\left[\left|\overline{X}_n\right| \geq \epsilon\right] \leq \frac{1}{\epsilon^2} \limsup_{n \rightarrow \infty} \frac{1}{n^2} \sum_{k = 1}^n   p_k \sigma_k^2.
\end{displaymath}
This finishes the proof.
\end{proof}

We now turn to the asymptotic normality of $\overline{X}_n$.  It turns out that the STA $\left\{\frac{1}{s_n} \left(X_k - \mu\right)\right\}$, which is of crucial importance, satisfies Lindeberg's condition if the sequence of contaminating variances $\left(\sigma_k\right)_k$ is controllable in a sense made precise in the following theorem.

\begin{thm}\label{thm:AsymNorEasy}
Suppose that
\begin{equation}
\lim_{n \rightarrow \infty} \frac{1}{s_n^2} \max_{k=1}^n \sigma_k^2 \rightarrow 0.\label{ConB}
\end{equation}
Then the STA $\left\{\frac{1}{s_n}\left(X_{k} - \mu\right)\right\}$ satisfies Lindeberg's condition, i.e.
\begin{displaymath}
\textrm{\upshape{Lin}}\left(\left\{\frac{1}{s_n} \left(X_k - \mu\right)\right\}\right) = 0.
\end{displaymath}
\end{thm}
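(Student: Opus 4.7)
The plan is to verify Lindeberg's condition directly, exploiting the mixture structure of each $X_k - \mu$. First I would check that $\left\{\frac{1}{s_n}(X_k - \mu)\right\}$ really is an STA: independence is inherited from the $X_k$'s, centering is immediate from $\mathbb{E}[X_k] = \mu$, and the row-wise variance sum equals $1$ by the very definition of $s_n^2$. Also note that $s_n \to \infty$ since $s_n^2 \geq n$ by (\ref{eq:sngeqn}).

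The key observation is that if $Y$ is a random variable with distribution $F$, then for every Borel function $g$,
\begin{equation*}
\mathbb{E}[g(X_k - \mu)] = (1 - p_k)\, \mathbb{E}[g(Y)] + p_k\, \mathbb{E}[g(\sigma_k Y)],
\end{equation*}
because the law of $X_k - \mu$ is $(1-p_k)F + p_k F(\cdot/\sigma_k)$. Applying this to $g(x) = x^2 \mathbf{1}_{\{|x| \geq s_n \epsilon\}}$ and summing over $k$, the Lindeberg sum splits into a clean piece
\begin{equation*}
\frac{1}{s_n^2} \sum_{k=1}^n (1-p_k)\, \mathbb{E}\left[Y^2;\, |Y| \geq s_n \epsilon\right]
\end{equation*}
and a contaminated piece
\begin{equation*}
\frac{1}{s_n^2} \sum_{k=1}^n p_k \sigma_k^2\, \mathbb{E}\left[Y^2;\, |Y| \geq s_n \epsilon / \sigma_k\right].
\end{equation*}

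For the clean piece, the bound $\sum_{k=1}^n(1-p_k) \leq n \leq s_n^2$ collapses it to $\mathbb{E}[Y^2;|Y|\geq s_n\epsilon]$, which vanishes by dominated convergence since $\mathbb{E}[Y^2] = 1$ and $s_n \to \infty$. For the contaminated piece, let $M_n = \max_{k=1}^n \sigma_k^2$. Since $\sigma_k \leq \sqrt{M_n}$, the truncation threshold satisfies $s_n \epsilon / \sigma_k \geq s_n \epsilon / \sqrt{M_n}$, so each inner expectation is dominated by $\mathbb{E}[Y^2;|Y|\geq s_n\epsilon/\sqrt{M_n}]$; pulling this bound out of the sum and using $\sum_{k=1}^n p_k\sigma_k^2 \leq s_n^2$, the entire contaminated piece is dominated by the single quantity $\mathbb{E}[Y^2;|Y|\geq s_n\epsilon/\sqrt{M_n}]$.

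The main obstacle, and precisely where hypothesis (\ref{ConB}) enters, is to force this last threshold to diverge: (\ref{ConB}) is exactly the statement that $s_n/\sqrt{M_n} \to \infty$, after which square-integrability of $Y$ closes the argument by dominated convergence. Without (\ref{ConB}), a single dominating $\sigma_k^2$ could keep the truncation level $s_n \epsilon / \sigma_k$ bounded, leaving nonvanishing mass in the Lindeberg sum.
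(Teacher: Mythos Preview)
Your proof is correct and follows essentially the same route as the paper: decompose the Lindeberg sum via the mixture representation into a clean part and a contaminated part, bound each by a single truncated second moment of a variable with law $F$ using $\sum_k(1-p_k)\le s_n^2$ and $\sum_k p_k\sigma_k^2\le s_n^2$, and then let the thresholds $\epsilon s_n$ and $\epsilon s_n/\sqrt{\max_k\sigma_k^2}$ diverge via (\ref{eq:sngeqn}) and (\ref{ConB}). The only cosmetic difference is that you spell out the STA verification and the dominated-convergence justification, which the paper leaves implicit.
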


\begin{proof}
Assume without loss of generality that $\mu = 0$ and let $X$ be a random variable with cumulative distribution function $F$. Then, for $\epsilon > 0$,
\begin{eqnarray*}
\lefteqn{\sum_{k = 1}^n \mathbb{E}\left[\left(\frac{1}{s_n}X_k\right)^2 ; \left|\frac{1}{s_n} X_k\right| \geq \epsilon\right]}\\
&=& \frac{1}{s_n^2} \sum_{k = 1}^n (1 - p_k) \mathbb{E}\left[X^2 ; \left|X\right| \geq \epsilon s_n\right] + \frac{1}{s_n^2} \sum_{k = 1}^n p_k \sigma_k^2 \mathbb{E}\left[X^2 ; \left|X\right| \geq \frac{\epsilon s_n }{\sigma_k}\right],
\end{eqnarray*} 
which is
\begin{eqnarray*}
&\leq& \frac{1}{s_n^2} \sum_{k = 1}^n (1 - p_k) \mathbb{E}\left[X^2 ; \left|X\right| \geq \epsilon s_n \right]\\
&&+  \frac{1}{s_n^2} \sum_{k = 1}^n p_k \sigma_k^2 \mathbb{E}\left[X^2 ; \left|X\right| \geq \epsilon \sqrt{\frac{s_n^2}{\max_{k=1}^n \sigma_k^2}}\right]\\
&\leq& \mathbb{E}[X^2 ; \left|X\right| \geq \epsilon s_n] + \mathbb{E}\left[X^2 ; \left|X\right| \geq \epsilon \sqrt{\frac{s_n^2}{\max_{k=1}^n \sigma_k^2}}\right].
\end{eqnarray*}
The latter quantity converges to $0$ as $n$ tends to $\infty$ by (\ref{eq:sngeqn}) and (\ref{ConB}). This finishes the proof.
\end{proof}

\begin{rem}\label{rem:AImB}
Observe that (\ref{ConB}) implies (\ref{ConA}).
\end{rem}

Classical central limit theory (Theorem \ref{CLT}) now leads to the following result.

\begin{thm}\label{thm:CEst}
Let $\xi \sim N(0,1)$ and suppose that
\begin{equation*}
\lim_{n \rightarrow \infty} \frac{1}{s_n^2} \max_{k=1}^n \sigma_k^2 \rightarrow 0.
\end{equation*}
Then
\begin{equation*}
\frac{n}{s_n} \left(\overline{X}_n - \mu\right) \stackrel{w}{\rightarrow} \xi.
\end{equation*}
\end{thm}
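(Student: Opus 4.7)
The plan is to reduce the statement directly to the classical central limit theorem (Theorem \ref{CLT}) applied to the standard triangular array $\xi_{n,k} = \frac{1}{s_n}(X_k - \mu)$, leveraging the Lindeberg condition that has just been established in Theorem \ref{thm:AsymNorEasy}.

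First I would verify that $\{\xi_{n,k}\}$ is indeed an STA in the sense of section 3. Independence of $\xi_{n,1},\ldots,\xi_{n,n}$ is inherited from the independence of $X_1,\ldots,X_n$; centeredness follows from $\mathbb{E}[X_k] = \mu$; and the row-wise normalization
\begin{equation*}
\sum_{k=1}^n \mathbb{E}\!\left[\xi_{n,k}^2\right] = \frac{1}{s_n^2}\sum_{k=1}^n \Var[X_k] = \frac{1}{s_n^2}\sum_{k=1}^n \bigl[(1-p_k) + p_k \sigma_k^2\bigr] = 1
\end{equation*}
holds by the very definition of $s_n^2$.

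Next, the hypothesis of the present theorem is identical to condition (\ref{ConB}) in Theorem \ref{thm:AsymNorEasy}. Applying that theorem immediately yields $\textrm{\upshape{Lin}}(\{\xi_{n,k}\}) = 0$, i.e.\ $\{\xi_{n,k}\}$ satisfies Lindeberg's condition. The implication $(2) \Rightarrow (1)$ in Theorem \ref{CLT} therefore gives
\begin{equation*}
\sum_{k=1}^n \xi_{n,k} \stackrel{w}{\rightarrow} \xi.
\end{equation*}

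Finally, a one-line algebraic identification closes the proof:
\begin{equation*}
\sum_{k=1}^n \xi_{n,k} = \frac{1}{s_n}\sum_{k=1}^n (X_k - \mu) = \frac{n}{s_n}\bigl(\overline{X}_n - \mu\bigr),
\end{equation*}
so the weak convergence above is exactly the asserted asymptotic normality. There is essentially no obstacle here: the only real content of the theorem was packed into Theorem \ref{thm:AsymNorEasy}, and the present result is a cosmetic reformulation in which the row-wise sums of the STA are rewritten as the centered and scaled sample mean.
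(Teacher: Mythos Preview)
Your proof is correct and follows exactly the same approach as the paper's own argument: identify the row-wise sums of the STA $\left\{\frac{1}{s_n}(X_k-\mu)\right\}$ with $\frac{n}{s_n}(\overline{X}_n-\mu)$, then invoke Theorem \ref{thm:AsymNorEasy} and Theorem \ref{CLT}. The only difference is that you spell out the STA verification explicitly, which the paper treats as already understood.
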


\begin{proof}
Notice that the $n$-th rowwise sum of $\left\{\frac{1}{s_n}\left(X_k - \mu\right)\right\}$ coincides with $\frac{n}{s_n} \left(\overline{X}_n - \mu\right)$. Now apply Theorem \ref{thm:AsymNorEasy} and Theorem \ref{CLT}.
\end{proof}

If the sequence $(\sigma_k)_k$ cannot be controlled by condition (\ref{ConB}), then it is more appropriate to make use of approximate central limit theory as outlined in the previous section. As Feller's condition plays an important role in this theory, we start with the following characterization. 

\begin{thm}\label{thm:FelNeg}
The STA $\left\{\frac{1}{s_n} (X_k - \mu)\right\}$ satisfies Feller's condition if and only if 
\begin{equation}
\lim_{n \rightarrow \infty} \frac{1}{s_n^2} \max_{k=1}^n p_k \sigma_k^2 = 0.\label{ConC}
\end{equation}
\end{thm}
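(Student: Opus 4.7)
The plan is to reduce Feller's condition for the STA $\{\xi_{n,k}\} = \{\frac{1}{s_n}(X_k - \mu)\}$ to an explicit statement about $\max_k p_k \sigma_k^2$ by computing the second moments directly and then bounding quantities of the form $1 + p_k(\sigma_k^2 - 1)$ against $p_k \sigma_k^2$.

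First I would compute $\mathbb{E}[\xi_{n,k}^2] = \frac{1}{s_n^2}\Var[X_k] = \frac{(1-p_k) + p_k \sigma_k^2}{s_n^2}$ using the formula for $\Var[X_k]$ recorded in Section~\ref{sec:intro}. Rewriting $(1-p_k) + p_k \sigma_k^2 = 1 + p_k(\sigma_k^2 - 1)$ and using that $a \mapsto 1+a$ commutes with $\max$, I obtain
\begin{equation*}
\max_{k=1}^n \mathbb{E}[\xi_{n,k}^2] = \frac{1 + \max_{k=1}^n p_k(\sigma_k^2 - 1)}{s_n^2}.
\end{equation*}
So Feller's condition is equivalent to $\frac{1}{s_n^2} + \frac{1}{s_n^2}\max_{k=1}^n p_k(\sigma_k^2 - 1) \to 0$.

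Next I would use the lower bound $s_n^2 \geq n$ from (\ref{eq:sngeqn}) to conclude that $\frac{1}{s_n^2} \to 0$ automatically; hence the display above tends to zero if and only if $\frac{1}{s_n^2}\max_{k=1}^n p_k(\sigma_k^2 - 1) \to 0$. It then remains to replace $p_k(\sigma_k^2 - 1)$ with $p_k \sigma_k^2$ in the $\max$. The bounds
\begin{equation*}
p_k(\sigma_k^2 - 1) \leq p_k \sigma_k^2 \leq p_k(\sigma_k^2 - 1) + 1,
\end{equation*}
valid because $p_k \in [0,1]$, yield after taking $\max_{k \leq n}$ and dividing by $s_n^2$ the sandwich
\begin{equation*}
\frac{\max_{k=1}^n p_k(\sigma_k^2 - 1)}{s_n^2} \leq \frac{\max_{k=1}^n p_k \sigma_k^2}{s_n^2} \leq \frac{\max_{k=1}^n p_k(\sigma_k^2 - 1)}{s_n^2} + \frac{1}{s_n^2}.
\end{equation*}
Since $\frac{1}{s_n^2} \to 0$, the two outer expressions tend to zero simultaneously, giving the desired equivalence with condition (\ref{ConC}).

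I do not expect a genuine obstacle here: the argument is a direct moment computation combined with elementary manipulations of $\max$, with the only mildly non-routine observation being that the $+1$ inside $1 + p_k(\sigma_k^2-1)$ is harmless because $s_n^2 \to \infty$.
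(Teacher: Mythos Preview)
Your proof is correct and takes essentially the same approach as the paper: compute the second moments explicitly as $\frac{(1-p_k)+p_k\sigma_k^2}{s_n^2}$, then use $s_n^2 \geq n \to \infty$ from (\ref{eq:sngeqn}) to show the bounded additive part is asymptotically negligible, leaving exactly $\frac{1}{s_n^2}\max_k p_k\sigma_k^2$. Your rewriting $(1-p_k)+p_k\sigma_k^2 = 1 + p_k(\sigma_k^2-1)$ before taking the max is in fact a bit cleaner than the paper's version, which splits the max over the two summands directly, but the underlying idea is identical.
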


\begin{proof}
Assume without loss of generality that $\mu = 0$. Now
\begin{equation*}
\max_{k = 1}^n \mathbb{E}\left[\frac{1}{s_n^2} X_k^2\right] = \frac{1}{s_n^2} \max_{k = 1}^n (1 - p_k) + \frac{1}{s_n^2}\max_{k = 1}^n p_k \sigma_k^2,
\end{equation*}
whence, by (\ref{eq:sngeqn}),
\begin{equation*}
\limsup_{n \rightarrow \infty} \max_{k = 1}^n \mathbb{E}\left[\frac{1}{s_n^2} X_k^2\right] = \limsup_{n \rightarrow \infty} \frac{1}{s_n^2} \max_{k = 1}^n p_k \sigma_k^2.
\end{equation*}
This finishes the proof.
\end{proof}

\begin{rem}
Observe that (\ref{ConB}) implies (\ref{ConC}), which in turn implies (\ref{ConA}).
\end{rem}

Theorems \ref{thm:MainLin} and \ref{thm:MainLin2} will reveal that even in the absence of condition (\ref{ConB}), the Lindeberg index of the STA $\left\{\frac{1}{s_n} \left(X_k - \mu\right)\right\}$ can still be bounded from above. Moreover, it can be explicitly computed under a fairly easy set of conditions. We need the following lemma.

\begin{lem}\label{lem:MainLin}
Suppose that the sequence $\left(\frac{1}{n} \sum_{k = 1}^n p_k \sigma_k^2\right)_n$ is bounded and let $X$ be a random variable with cumulative distribution function $F$. Then
\begin{eqnarray*}
\lefteqn{\textrm{\upshape{Lin}}\left(\left\{\frac{1}{s_n} (X_k - \mu)\right\}\right)}\\
&=& \sup_{\gamma > 0} \sup_{\epsilon > 0} \limsup_{n \rightarrow \infty} \frac{1}{s_n^2} \sum_{k = \lceil \gamma n \rceil}^n p_k \sigma_k^2 \mathbb{E}\left[X^2 ; \left|X\right| \geq \frac{\epsilon s_n}{\sigma_k}\right],
\end{eqnarray*}
where $\lceil \cdot \rceil$ is the ceiling function.
\end{lem}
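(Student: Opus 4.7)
The plan is to assume $\mu = 0$ without loss of generality and to let $X$ be an $F$-distributed random variable. The first step is to unpack the mixture structure: since $X_k \sim (1-p_k) F + p_k F(\cdot/\sigma_k)$, a change of variable gives
\begin{equation*}
\mathbb{E}[X_k^2; |X_k|\geq \epsilon s_n] = (1-p_k)\,\mathbb{E}[X^2;|X|\geq \epsilon s_n] + p_k\sigma_k^2\,\mathbb{E}[X^2;|X|\geq \epsilon s_n/\sigma_k].
\end{equation*}
Summing over $k$ and dividing by $s_n^2$, the first contribution is at most $\mathbb{E}[X^2;|X|\geq\epsilon s_n]$ (since $\sum_k(1-p_k)\leq s_n^2$), and it vanishes as $n\to\infty$ because $s_n\to\infty$ by (\ref{eq:sngeqn}). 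Taking $\sup_{\epsilon>0}$ and $\limsup_n$ on both sides yields the intermediate identity
\begin{equation*}
\textrm{\upshape{Lin}}\left(\left\{\tfrac{1}{s_n}(X_k-\mu)\right\}\right) = \sup_{\epsilon > 0}\limsup_{n\to\infty}\frac{1}{s_n^2}\sum_{k=1}^n p_k\sigma_k^2\,\mathbb{E}[X^2;|X|\geq \epsilon s_n/\sigma_k].
\end{equation*}

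Next, I would cut the $k$-sum at $k_0=\lceil\gamma n\rceil$. The inequality $\geq$ in the lemma is immediate, because for each fixed $\epsilon,\gamma>0$ the partial sum $\sum_{k\geq k_0}$ is dominated by the full sum. For the reverse inequality, the head must be shown to be uniformly small as $\gamma\downarrow 0$. Using $\mathbb{E}[X^2;\cdot]\leq \mathbb{E}[X^2]=1$ together with $s_n^2\geq n$, the head contribution satisfies
\begin{equation*}
\frac{1}{s_n^2}\sum_{k=1}^{k_0-1}p_k\sigma_k^2\,\mathbb{E}[X^2;|X|\geq\epsilon s_n/\sigma_k] \leq \frac{1}{n}\sum_{k=1}^{k_0-1}p_k\sigma_k^2.
\end{equation*}
The boundedness hypothesis enters precisely here: setting $C := \sup_m \tfrac{1}{m}\sum_{k=1}^m p_k\sigma_k^2 < \infty$ gives $\sum_{k=1}^{k_0-1} p_k\sigma_k^2 \leq C k_0$, so the head is at most $C k_0/n$, whose $\limsup_n$ equals $C\gamma$.

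Combining the two estimates, for every $\epsilon,\gamma > 0$,
\begin{multline*}
\limsup_{n\to\infty}\frac{1}{s_n^2}\sum_{k=1}^n p_k\sigma_k^2\,\mathbb{E}[X^2;|X|\geq\epsilon s_n/\sigma_k]\\
\leq C\gamma + \limsup_{n\to\infty}\frac{1}{s_n^2}\sum_{k=k_0}^n p_k\sigma_k^2\,\mathbb{E}[X^2;|X|\geq\epsilon s_n/\sigma_k].
\end{multline*}
Taking $\sup_{\epsilon>0}$ throughout and then letting $\gamma\downarrow 0$ on the right yields the $\leq$ direction and completes the proof. The delicate step is the head estimate: the boundedness hypothesis is the only ingredient that allows the small-index contribution to be controlled uniformly in $n$, and without it the reduction to a bulk sum starting at $\lceil\gamma n\rceil$ could not be carried out.
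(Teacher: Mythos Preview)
Your proposal is correct and follows essentially the same approach as the paper: the mixture decomposition to isolate the $p_k\sigma_k^2$ contribution (the paper invokes this by reference to the proof of Theorem~\ref{thm:AsymNorEasy}), and the head estimate via $s_n^2\geq n$ together with the boundedness assumption to get a bound of the form $K\gamma$, then letting $\gamma\downarrow 0$. The only difference is the order in which these two steps are carried out, which is immaterial.
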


\begin{proof}
Assume without loss of generality that $\mu = 0$ and choose $K \in \mathbb{R}^+_0$ such that for all $n$
\begin{equation}
\frac{1}{n} \sum_{k = 1}^n p_k \sigma_k^2 \leq K.\label{eq:CesaroBddByK} 
\end{equation}
Next, fix $\gamma > 0$ small. Then, for $n$ large, by (\ref{eq:sngeqn}) and (\ref{eq:CesaroBddByK}),
\begin{eqnarray*}
\lefteqn{\frac{1}{s_n^2} \sum_{k = 1}^{\lceil \gamma n \rceil - 1} p_k \sigma_k^2 \mathbb{E}\left[X^2 ; \left|X\right| \geq \frac{\epsilon s_n}{\sigma_k}\right]}\\
&\leq& \gamma \frac{1}{\gamma n} \sum_{k = 1}^{\lceil \gamma n \rceil - 1} p_k \sigma_k^2\\
&\leq& \gamma \frac{1}{\lceil \gamma n \rceil - 1}  \sum_{k = 1}^{\lceil \gamma n \rceil - 1} p_k \sigma_k^2\\
&\leq& K \gamma,
\end{eqnarray*} 
whence
\begin{eqnarray*}
\lefteqn{\limsup_{n \rightarrow \infty} \frac{1}{s_n^2} \sum_{k = 1}^{n} p_k \sigma_k^2 \mathbb{E}\left[X^2 ; \left|X\right| \geq \frac{\epsilon s_n}{\sigma_k}\right]}\\
&\leq& \limsup_{n \rightarrow \infty} \frac{1}{s_n^2} \sum_{k = 1}^{\lceil \gamma n \rceil - 1} p_k \sigma_k^2 \mathbb{E}\left[X^2 ; \left|X\right| \geq \frac{\epsilon s_n}{\sigma_k}\right]\\
&&+ \limsup_{n \rightarrow \infty} \frac{1}{s_n^2} \sum_{k = \lceil \gamma n\rceil}^{n} p_k \sigma_k^2 \mathbb{E}\left[X^2 ; \left|X\right| \geq \frac{\epsilon s_n}{\sigma_k}\right]\\
&\leq& K \gamma + \limsup_{n \rightarrow \infty} \frac{1}{s_n^2} \sum_{k = \lceil \gamma n \rceil}^{n} p_k \sigma_{k}^2 \mathbb{E}\left[X^2 ; \left|X\right| \geq \frac{\epsilon s_n}{\sigma_k}\right].
\end{eqnarray*}
Thus we have shown that 
\begin{eqnarray}
\lefteqn{\limsup_{n \rightarrow \infty} \frac{1}{s_n^2} \sum_{k = 1}^{n} p_k \sigma_k^2 \mathbb{E}\left[X^2 ; \left|X\right| \geq \frac{\epsilon s_n}{\sigma_k}\right]}\label{eq:gammaenough}\\
&=& \sup_{\gamma > 0} \limsup_{n \rightarrow \infty}  \frac{1}{s_n^2} \sum_{k = \lceil \gamma n \rceil}^{n} p_k \sigma_{k}^2 \mathbb{E}\left[X^2 ; \left|X\right| \geq \frac{\epsilon s_n}{\sigma_k}\right].\nonumber
\end{eqnarray}
Now, arguing analogously as in the proof of Theorem \ref{thm:AsymNorEasy} and using (\ref{eq:gammaenough}), we get
\begin{eqnarray*}
\lefteqn{\textrm{\upshape{Lin}}\left(\left\{\frac{1}{s_n} X_k\right\}\right)}\\
&=& \sup_{\epsilon > 0} \limsup_{n \rightarrow \infty} \frac{1}{s_n^2}  \sum_{k = 1}^n \mathbb{E}\left[X^2 ; \left|X\right| \geq \frac{\epsilon s_n}{\sigma_k}\right]\\
&=& \sup_{\epsilon > 0} \sup_{\gamma > 0} \limsup_{n \rightarrow \infty}  \frac{1}{s_n^2} \sum_{k = \lceil \gamma n \rceil}^{n} p_k \sigma_{k}^2 \mathbb{E}\left[X^2 ; \left|X\right| \geq \frac{\epsilon s_n}{\sigma_k}\right]\\
&=& \sup_{\gamma > 0} \sup_{\epsilon > 0} \limsup_{n \rightarrow \infty}  \frac{1}{s_n^2} \sum_{k = \lceil \gamma n \rceil}^{n} p_k \sigma_{k}^2 \mathbb{E}\left[X^2 ; \left|X\right| \geq \frac{\epsilon s_n}{\sigma_k}\right],
\end{eqnarray*}
completing the proof.
\end{proof}

\begin{thm}\label{thm:MainLin}
The inequality
\begin{equation}
\textrm{\upshape{Lin}}\left(\left\{\frac{1}{s_n} (X_k - \mu)\right\}\right) \leq \limsup_{n \rightarrow \infty} \frac{1}{s_n^2} \sum_{k = 1}^n p_k \sigma_k^2\label{LinIneqMain}
\end{equation}
always holds. If, in addition, 
\begin{enumerate}
\item $\displaystyle{(\sigma^2_n)_n}$ is monotonically increasing,
\item $\displaystyle{\liminf_{n \rightarrow \infty} \frac{1}{n} \sigma_n^2 > 0}$,
\item $\displaystyle{\left(\frac{1}{n} \sum_{k = 1}^n p_k \sigma_k^2\right)_n}$ is bounded,
\end{enumerate}
then the inequality in (\ref{LinIneqMain}) becomes an equality. 
\end{thm}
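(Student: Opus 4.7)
The plan is to compute the Lindeberg sum explicitly using the mixture structure of $X_k$ and then bound it both above and, under conditions (1)--(3), below. Writing $Y_k = X_k - \mu$, which has distribution $(1-p_k)F + p_k F(\cdot/\sigma_k)$, and performing the substitution $u = y/\sigma_k$ in the contamination piece, one obtains, for every $\epsilon > 0$,
\begin{equation*}
\sum_{k=1}^n \mathbb{E}\left[\tfrac{Y_k^2}{s_n^2};\tfrac{|Y_k|}{s_n}\geq \epsilon\right] = \frac{A_n(\epsilon) + B_n(\epsilon)}{s_n^2},
\end{equation*}
where $A_n(\epsilon) = \sum_{k=1}^n (1-p_k)\int_{|y|\geq \epsilon s_n} y^2\, dF(y)$ and $B_n(\epsilon) = \sum_{k=1}^n p_k\sigma_k^2 \int_{|u|\geq \epsilon s_n/\sigma_k} u^2\, dF(u)$. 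This identity is the cornerstone of the argument.

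For the general inequality (\ref{LinIneqMain}), the $A_n$ piece is easily dispatched: $\sum(1-p_k)\leq n\leq s_n^2$ combined with $s_n^2\geq n\to\infty$ gives $A_n(\epsilon)/s_n^2 \leq \int_{|y|\geq \epsilon s_n} y^2\, dF(y)\to 0$. For $B_n$, bounding the inner integral trivially by $\int u^2\, dF(u) = 1$ yields $B_n(\epsilon)/s_n^2 \leq s_n^{-2}\sum_{k=1}^n p_k\sigma_k^2$. Taking $\limsup_n$ and then $\sup_{\epsilon>0}$ proves (\ref{LinIneqMain}).

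For the matching lower bound under (1)--(3), I extract two quantitative comparabilities. Condition (3) yields a constant $C$ with $\sum_{k=1}^m p_k\sigma_k^2\leq Cm$ for all $m$, hence $s_n^2\leq (1+C)n$; condition (2) gives $c>0$ and $N_0$ with $\sigma_n^2\geq cn$ for $n\geq N_0$, and monotonicity (1) then forces $\sigma_k^2\geq c\alpha n$ whenever $\alpha n\geq N_0$ and $k\in[\alpha n, n]$. Fix $\alpha\in(0,1)$ and $\epsilon>0$ (small). On the bulk range $k\in[\alpha n,n]$ one has $\epsilon s_n/\sigma_k\leq \epsilon\sqrt{(1+C)/(c\alpha)}$, so $\int_{|u|\geq \epsilon s_n/\sigma_k} u^2\, dF(u)\geq 1-\epsilon^2(1+C)/(c\alpha)$ (using $\int_{|u|<\eta} u^2\, dF(u)\leq \eta^2$); the complementary tail satisfies $\sum_{k<\alpha n} p_k\sigma_k^2\leq C\alpha n\leq C\alpha s_n^2$ by (3) applied at level $\lfloor\alpha n\rfloor$. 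Combining,
\begin{equation*}
\frac{B_n(\epsilon)}{s_n^2}\geq \left(1-\frac{\epsilon^2(1+C)}{c\alpha}\right)\left[\frac{1}{s_n^2}\sum_{k=1}^n p_k\sigma_k^2 - C\alpha\right].
\end{equation*}
Taking $\limsup_n$, then letting $\epsilon\to 0$ (which is legitimate since $\mathrm{Lin}$ involves $\sup_\epsilon$), and finally $\alpha\to 0$, yields $\mathrm{Lin}(\{\xi_{n,k}\})\geq \limsup_n s_n^{-2}\sum_{k=1}^n p_k\sigma_k^2$, which together with (\ref{LinIneqMain}) is the claimed equality.

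The main obstacle is the coordinated choice of the parameters $\epsilon$ and $\alpha$: $\epsilon$ must be small relative to $\alpha$ so that the truncation level $\epsilon s_n/\sigma_k$ becomes negligible on the bulk range $[\alpha n,n]$, while $\alpha$ itself must be small so that the small-$k$ tail is negligible compared to the full sum. Conditions (1) and (2) serve purely to pin $\sigma_k$ to the scale of $s_n$ on the bulk, whereas (3) plays a dual role, bounding $s_n^2$ above by $(1+C)n$ and yielding the tail estimate $\sum_{k<\alpha n} p_k\sigma_k^2\leq C\alpha n$; dropping any one of them breaks the argument.
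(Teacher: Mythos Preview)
Your proof is correct and follows essentially the same approach as the paper's: the same mixture decomposition into the $(1-p_k)$ and $p_k\sigma_k^2$ pieces, the same splitting of the index range at $\alpha n$ (the paper writes $\gamma n$), the same use of conditions (1)--(2) to bound $s_n/\sigma_k$ uniformly on the bulk and of condition (3) to control the small-$k$ tail, and the same passage $\epsilon\to 0$ followed by $\alpha\to 0$. The only differences are cosmetic: the paper factors the tail-splitting step into a separate lemma (Lemma~\ref{lem:MainLin}) and handles the $\epsilon\to 0$ limit via $\sup_\epsilon \mathbb{E}[X^2;|X|\geq \epsilon M]=1$ rather than your explicit bound $\int_{|u|<\eta}u^2\,dF(u)\leq\eta^2$.
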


\begin{proof}
Inequality (\ref{LinIneqMain}) is easily established by the fact that $\mathbb{E}\left[X^2\right] = 1$. Now suppose that the three additional conditions in Theorem \ref{thm:MainLin} are fulfilled. The fact that
\begin{displaymath}
\liminf_{n \rightarrow \infty} \frac{1}{n} \sigma_n^2 > 0
\end{displaymath}
allows us to choose $\delta > 0$ and $n_0$ such that for all $n \geq n_0$
\begin{equation}
\sigma_n^2 \geq \delta n.\label{eq:conliminf}
\end{equation}
Furthermore, the boundedness of $\left(\frac{1}{n} \sum_{k = 1}^n p_k \sigma_k^2\right)_n$ allows us to pick $K \in \mathbb{R}^+_0$ such that for all $n$
\begin{equation}
\frac{1}{n} \sum_{k = 1}^n p_k \sigma_k^2 \leq K.\label{eq:CesBddByK2}
\end{equation}
Now fix $\gamma > 0$ small. Then, for $n$ so large that 
\begin{equation}
\lceil \gamma n\rceil \geq n_0\label{eq:nverylarge}
\end{equation}
and for $k$ such that
\begin{equation}
\lceil \gamma n \rceil \leq k \leq n,\label{eq:kgood}
\end{equation}
we have, by (\ref{eq:nverylarge}), (\ref{eq:kgood}), (\ref{eq:CesBddByK2}), and (\ref{eq:conliminf}),
\begin{eqnarray*}
\left(\frac{s_n}{\sigma_k}\right)^2 &=& \frac{\sum_{k = 1}^n (1 - p_k) + \sum_{k = 1}^n p_k \sigma_k^2}{\sigma_k^2}\\
&\leq& \frac{\sum_{k = 1}^n (1 - p_k) + \sum_{k = 1}^n p_k \sigma_k^2}{\delta k}\\
&\leq& \frac{\sum_{k = 1}^n (1 - p_k) + \sum_{k = 1}^n p_k \sigma_k^2}{\delta \lceil \gamma n \rceil}\\
&\leq& \frac{1}{\delta \gamma} \left(\frac{1}{n} \sum_{k = 1}^n \left(1 - p_k\right) + \frac{1}{n} \sum_{k = 1}^n p_k \sigma_k^2\right)\\
&\leq& \frac{1 + K}{\delta \gamma},
\end{eqnarray*}
whence
\begin{equation*}
\mathbb{E}\left[X^2 ; \left|X\right| \geq \frac{\epsilon s_n}{\sigma_k}\right] \geq \mathbb{E}\left[X^2 ; \left|X\right| \geq \epsilon \sqrt{\frac{1 + K}{\delta \gamma}}\right],
\end{equation*}
with $X$ a random variable with cumulative distribution function $F$. In particular,
\begin{eqnarray}
\lefteqn{\sup_{\epsilon > 0} \limsup_{n \rightarrow \infty} \frac{1}{s_n^2} \sum_{k = \lceil \gamma n\rceil}^n p_k \sigma_k^2 \mathbb{E}\left[X^2 ; \left|X\right| \geq \frac{\epsilon s_n}{\sigma_k}\right]}\label{eq:BigIneqLin}\\
&\geq& \sup_{\epsilon > 0} \limsup_{n \rightarrow \infty} \frac{1}{s_n^2} \sum_{k = \lceil \gamma n\rceil}^n p_k \sigma_k^2 \mathbb{E}\left[X^2 ; \left|X\right| \geq \epsilon \sqrt{\frac{1 + K}{\delta \gamma}}\right]\nonumber\\
&=& \sup_{\epsilon > 0} \mathbb{E}\left[X^2 ; \left|X\right| \geq \epsilon \sqrt{\frac{1 + K}{\delta \gamma}}\right] \limsup_{n \rightarrow \infty} \frac{1}{s_n^2} \sum_{k = \lceil \gamma n\rceil}^n p_k \sigma_k^2\nonumber\\
&=& \limsup_{n \rightarrow \infty} \frac{1}{s_n^2} \sum_{k = \lceil \gamma n\rceil}^n p_k \sigma_k^2,\nonumber
\end{eqnarray}
where the last equality follows from the fact that $\mathbb{E}\left[X^2\right] = 1$. Combining Lemma \ref{lem:MainLin} and the inequality shown by (\ref{eq:BigIneqLin}) gives
\begin{eqnarray*}
\lefteqn{\textrm{\upshape{Lin}}\left(\left\{\frac{1}{s_n} (X_k - \mu)\right\}\right)}\label{eq:keyineqtex}\\
&=& \sup_{\gamma > 0} \sup_{\epsilon > 0} \limsup_{n \rightarrow \infty} \frac{1}{s_n^2} \sum_{k = \lceil \gamma n \rceil}^n p_k \sigma_k^2 \mathbb{E}\left[X^2 ; \left|X\right| \geq \frac{\epsilon s_n}{\sigma_k}\right]\nonumber\\
&\geq& \sup_{\gamma > 0} \limsup_{n \rightarrow \infty} \frac{1}{s_n^2} \sum_{k = \lceil \gamma n\rceil}^n p_k \sigma_k^2\nonumber\\
&=& \limsup_{n \rightarrow \infty} \frac{1}{s_n^2} \sum_{k = 1}^n p_k \sigma_k^2,\nonumber
\end{eqnarray*}
the last equality following by mimicking the proof of Lemma \ref{lem:MainLin}. This finishes the proof.
\end{proof}

\begin{thm}\label{thm:MainLin2}
Suppose that
\begin{enumerate}
\item $\displaystyle{\left(\frac{1}{n} \sum_{k = 1}^n p_k \sigma_k^2\right)_n}$ is convergent to $L \in \mathbb{R}^+$,
\item $\displaystyle{\left(\frac{1}{n}\sum_{k=1}^n p_k\right)_n}$ is convergent to 0.
\end{enumerate}
Then the inequality
\begin{equation}
\textrm{\upshape{Lin}}\left(\left\{\frac{1}{s_n} (X_k - \mu)\right\}\right) \leq \frac{L}{1 + L}\label{eq:concformlin}
\end{equation}
holds. If, in addition, 
\begin{itemize}
\item[(3)] $\displaystyle{(\sigma^2_n)_n}$ is monotonically increasing,
\item[(4)] $\displaystyle{\liminf_{n \rightarrow \infty} \frac{1}{n} \sigma_n^2 > 0}$,
\end{itemize}
then the inequality in (\ref{eq:concformlin}) becomes an equality.
\end{thm}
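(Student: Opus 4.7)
\medskip
\noindent\textbf{Proof plan.} The strategy is to deduce both claims from Theorem \ref{thm:MainLin} by evaluating the upper bound $\limsup_{n \to \infty} \frac{1}{s_n^2}\sum_{k=1}^n p_k\sigma_k^2$ explicitly under the present hypotheses. First I would expand
\begin{equation*}
\frac{s_n^2}{n} \;=\; 1 \;-\; \frac{1}{n}\sum_{k=1}^n p_k \;+\; \frac{1}{n}\sum_{k=1}^n p_k\sigma_k^2,
\end{equation*}
which by hypotheses (1) and (2) converges to $1 - 0 + L = 1 + L$. Dividing by $n$ in numerator and denominator gives
\begin{equation*}
\frac{1}{s_n^2}\sum_{k=1}^n p_k\sigma_k^2 \;=\; \frac{\frac{1}{n}\sum_{k=1}^n p_k\sigma_k^2}{\frac{1}{n}s_n^2} \;\longrightarrow\; \frac{L}{1+L},
\end{equation*}
so that the limsup appearing on the right-hand side of (\ref{LinIneqMain}) is in fact a genuine limit and equals $\frac{L}{1+L}$. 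Plugging this into the inequality furnished by Theorem \ref{thm:MainLin} yields (\ref{eq:concformlin}).

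For the sharpness statement, I would verify that conditions (1), (2), and (3) of Theorem \ref{thm:MainLin} are all met under assumptions (1)--(4) of the present theorem. Assumptions (3) and (4) here coincide verbatim with (1) and (2) there. Furthermore, assumption (1) here forces the sequence $\bigl(\frac{1}{n}\sum_{k=1}^n p_k\sigma_k^2\bigr)_n$ to be bounded, since every convergent sequence is bounded, giving condition (3) of Theorem \ref{thm:MainLin}. The equality clause of that theorem therefore applies, and combining it with the asymptotic computed above yields
\begin{equation*}
\textrm{\upshape{Lin}}\left(\left\{\tfrac{1}{s_n}(X_k - \mu)\right\}\right) \;=\; \limsup_{n \rightarrow \infty}\frac{1}{s_n^2}\sum_{k=1}^n p_k\sigma_k^2 \;=\; \frac{L}{1+L},
\end{equation*}
as required.

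There is essentially no serious obstacle here: the theorem is a clean corollary of Theorem \ref{thm:MainLin} once one notices that the elementary asymptotic $s_n^2/n \to 1+L$ converts the abstract limsup in (\ref{LinIneqMain}) into the explicit quantity $\frac{L}{1+L}$. The only point that requires any attention at all is bookkeeping of which hypotheses of Theorem \ref{thm:MainLin} are supplied by which hypotheses of the current theorem, and in particular noticing that the boundedness condition (3) of Theorem \ref{thm:MainLin} comes for free from the stronger convergence hypothesis (1) imposed here.
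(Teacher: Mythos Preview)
Your proposal is correct and matches the paper's own proof essentially line for line: both invoke Theorem~\ref{thm:MainLin}, rewrite $\frac{1}{s_n^2}\sum_{k=1}^n p_k\sigma_k^2$ as a ratio of Ces\`aro averages, use hypotheses (1) and (2) to evaluate the limit as $\frac{L}{1+L}$, and then appeal to the equality clause of Theorem~\ref{thm:MainLin} under the extra assumptions (3) and (4). Your explicit remark that convergence in (1) supplies the boundedness hypothesis (3) of Theorem~\ref{thm:MainLin} is a point the paper leaves implicit.
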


\begin{proof}
Theorem \ref{thm:MainLin} gives
\begin{eqnarray*}
\textrm{\upshape{Lin}}\left(\left\{\frac{1}{s_n} \left(X_k - \mu\right)\right\}\right) &\leq& \limsup_{n \rightarrow \infty} \frac{1}{s_n^2} \sum_{k = 1}^n p_k \sigma_k^2\\
&=& \limsup_{n \rightarrow \infty} \frac{\sum_{k = 1}^n p_k \sigma_k^2}{\sum_{k = 1}^n (1 - p_k) + \sum_{k = 1}^n p_k \sigma_k^2}\\
&=& \limsup_{n \rightarrow \infty} \frac{\frac{1}{n} \sum_{k = 1}^n p_k \sigma_k^2}{1 - \frac{1}{n} \sum_{k = 1}^n  p_k + \frac{1}{n}\sum_{k = 1}^n p_k \sigma_k^2}\\
&=& \frac{L}{1 + L},
\end{eqnarray*}
the last equality following from conditions (1) and (2) in Theorem \ref{thm:MainLin2}. This establishes (\ref{eq:concformlin}). If conditions (3) and (4) in Theorem \ref{thm:MainLin2} are also satisfied, then Theorem \ref{thm:MainLin} shows that the first inequality in the above calculation becomes an equality and we are done.
\end{proof}

Now approximate central limit theory (Theorem \ref{QCLT}) gives the following result. Recall that the Kolmogorov distance is given by (\ref{def:Kol}).

\begin{thm}\label{thm:QEst}
Let $\xi \sim N(0,1)$ and suppose that
\begin{enumerate}
\item $\displaystyle{\left(\frac{1}{n} \sum_{k = 1}^n p_k \sigma_k^2\right)_n}$ is convergent to $L \in \mathbb{R}^+$,
\item $\displaystyle{\left(\frac{1}{n} \sum_{k=1}^n p_k\right)_n}$ is convergent to $0$,
\item $\displaystyle{\left(\frac{1}{s_n^2}\max_{k = 1}^n p_k \sigma_k^2\right)_n}$ is convergent to $0$.
\end{enumerate}
Then
\begin{equation}
\limsup_{n \rightarrow \infty} K\left(\xi,\frac{n}{s_n}\left( \overline{X}_n - \mu\right)\right) \leq \frac{L}{1 + L}.\label{ineq:Main}
\end{equation}
\end{thm}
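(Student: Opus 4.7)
The plan is to assemble Theorem~\ref{thm:QEst} as a direct consequence of the earlier machinery: recognize $\frac{n}{s_n}(\overline{X}_n - \mu)$ as the row-wise sum of the STA $\left\{\frac{1}{s_n}(X_k - \mu)\right\}$, then apply the ACLT (Theorem~\ref{QCLT}) together with the Lindeberg-index bound from Theorem~\ref{thm:MainLin2}.

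First I would check that $\left\{\frac{1}{s_n}(X_k - \mu)\right\}$ is indeed an STA. The independence and mean-zero properties are immediate from the setup in Section~\ref{sec:intro}, and the normalization by $s_n$ is engineered so that
\begin{equation*}
\sum_{k=1}^n \mathbb{E}\!\left[\left(\tfrac{1}{s_n}(X_k - \mu)\right)^2\right] = \frac{1}{s_n^2}\sum_{k=1}^n \Var[X_k] = \frac{1}{s_n^2}\cdot s_n^2 = 1,
\end{equation*}
which is exactly property (3) in the definition of an STA. Moreover, the row-wise sum of this STA is precisely $\frac{1}{s_n}\sum_{k=1}^n(X_k - \mu) = \frac{n}{s_n}(\overline{X}_n - \mu)$, putting us in the exact setting required by the ACLT.

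Next I would verify that the STA satisfies Feller's condition. Assumption (3) of the theorem is literally the condition (\ref{ConC}) appearing in Theorem~\ref{thm:FelNeg}, so that result immediately yields Feller's condition. Together with the STA structure, this lets me invoke Theorem~\ref{QCLT} (the ACLT) to conclude
\begin{equation*}
\limsup_{n \to \infty} K\!\left(\xi,\frac{n}{s_n}(\overline{X}_n - \mu)\right) \leq \textrm{\upshape{Lin}}\!\left(\left\{\tfrac{1}{s_n}(X_k - \mu)\right\}\right).
\end{equation*}

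Finally I would observe that assumptions (1) and (2) of Theorem~\ref{thm:QEst} are exactly hypotheses (1) and (2) of Theorem~\ref{thm:MainLin2}, so the latter applies and delivers the upper bound $\textrm{\upshape{Lin}}(\{\frac{1}{s_n}(X_k - \mu)\}) \leq \frac{L}{1 + L}$. Chaining this inequality with the ACLT bound above yields (\ref{ineq:Main}). There is no real obstacle here: the substantive work was already done in Theorems~\ref{thm:FelNeg} and \ref{thm:MainLin2}, and the present theorem is essentially a bookkeeping step that matches the hypotheses to those two results and then feeds the Lindeberg bound through the ACLT. The only mild subtlety is making sure one reads assumption (3) as the Feller condition (\ref{ConC}) rather than attempting to derive it from (1) and (2), since in general those do not suffice.
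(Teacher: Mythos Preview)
Your proof is correct and follows essentially the same route as the paper: invoke Theorem~\ref{thm:FelNeg} for Feller's condition via assumption (3), Theorem~\ref{thm:MainLin2} for the Lindeberg bound via assumptions (1) and (2), and then feed both into the ACLT (Theorem~\ref{QCLT}) after identifying the row-wise sum with $\frac{n}{s_n}(\overline{X}_n-\mu)$. The only difference is that you spell out the STA verification explicitly, which the paper takes for granted.
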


\begin{proof}
Theorem \ref{thm:FelNeg} is applicable to conclude that the STA $\left\{\frac{1}{s_n} \left(X_k - \mu\right)\right\}$ satisfies Feller's condition. Furthermore, Theorem \ref{thm:MainLin2} reveals that the Lindeberg index of this STA is bounded from above by $\frac{L}{1+L}$. Finally, the $n$-th row-wise sum of this STA coinciding with $\frac{n}{s_n} \left(\overline{X}_n - \mu\right)$, it suffices to apply Theorem \ref{QCLT}.
\end{proof}

We wish to make the following final reflection. If, in addition to the conditions formulated in Theorem \ref{thm:QEst}, $(\sigma_n)_n$ increases monotonically and $\liminf_{n \rightarrow \infty} \frac{1}{n} \sigma_n > 0$, then, by Theorem \ref{thm:MainLin2}, $\textrm{\upshape{Lin}}\left(\left\{\frac{1}{s_n} (X_k - \mu)\right\}\right) = \frac{L}{1 + L}$. Thus, if $L \neq 0$, classical central limit theory (Theorem \ref{CLT}) leads to the conclusion that the estimator $\overline{X}_n$ fails to be asymptotically normal in the sense that the sequence $\left(\frac{n}{s_n} \left(\overline{X}_n - \mu\right)\right)_n$  does not converge weakly to $\xi$. However, inequality (\ref{ineq:Main}), derived from more general approximate central limit theory (Theorem \ref{QCLT}), shows that $\overline{X}_n$ is still close to being asymptotically normal when $L$ is small. 

We empirically demonstrate these ideas in the next section through an example and a simulation study.

\section{Example and simulation study} 

We keep the terminology and the notation of the previous sections. 

In the following theorem, we apply the results obtained in the previous section to a specific choice for $p_k$ and $\sigma_k^2$. Recall that we say that $\overline{X}_n$ is weakly consistent (WC) for $\mu$ if (\ref{WeakCon}) holds and asymptotically normal (AN) if (\ref{AsymNor}) holds.

\begin{thm}\label{thm:Example}
Let
\begin{equation*}
p_k = p k^{-a} \textrm{ with } p \in \left]0,1\right[ \textrm{ and } a \in \left]0,\infty\right[
\end{equation*}
and
\begin{equation*}
\sigma_k^2 = s^2 k^b \textrm{ with } s \in \left]1,\infty\right[ \textrm{ and } b \in \left]0,\infty\right[.
\end{equation*}
Then the following assertions are true.
\begin{enumerate}
 \item If $b < 1$, then $\overline{X}_n$ is WC for $\mu$ and AN.
 \item If $b \geq 1$ and $a > b$, then $\overline{X}_n$ is WC for $\mu$ and AN.
 \item If $b \geq 1$ and $a = b$, then $\overline{X}_n$ is WC for $\mu$, but fails to be AN. However,
 \begin{equation*}
 \limsup_{n \rightarrow \infty} K\left(\xi,\frac{n}{s_n} \left(\overline{X}_n - \mu\right)\right) \leq \frac{p s^2}{1 + p s^2}.
 \end{equation*}
 \end{enumerate}
\end{thm}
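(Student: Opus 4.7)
The plan is a case-by-case verification of the hypotheses of the theorems from Section 4, with the entire computation reducing to standard asymptotic estimates for power sums
\begin{equation*}
\sum_{k=1}^n k^c \sim \begin{cases} n^{c+1}/(c+1) & \text{if } c > -1, \\ \log n & \text{if } c = -1, \\ \text{bounded} & \text{if } c < -1, \end{cases}
\end{equation*}
applied with $c = b-a$ (for $\sum p_k \sigma_k^2$) and $c = -a$ (for $\sum p_k$). The inequality $s_n^2 \geq n$ from (\ref{eq:sngeqn}) will be used repeatedly.

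For case (1), monotonicity of $\sigma_k^2$ in $k$ gives $\max_{k \leq n} \sigma_k^2 = s^2 n^b$, so by (\ref{eq:sngeqn})
\begin{equation*}
\frac{1}{s_n^2} \max_{k=1}^n \sigma_k^2 \leq s^2 n^{b-1} \to 0
\end{equation*}
whenever $b < 1$, which is exactly condition (\ref{ConB}). Theorem \ref{thm:CEst} then yields AN, and Remark \ref{rem:AImB} combined with Theorem \ref{thm:WeakCon} yields WC.

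In case (2), condition (\ref{ConB}) generally fails when $b \geq 1$, so I invoke Theorem \ref{thm:QEst} instead. Since $a > b$ gives $b-a < 0$, all three sub-cases of the power sum estimate yield $\frac{1}{n}\sum_{k=1}^n p_k \sigma_k^2 \to 0$, so hypothesis (1) of Theorem \ref{thm:QEst} holds with $L = 0$. Hypothesis (2) follows similarly from $a > 0$. For hypothesis (3), note that $p_k\sigma_k^2 = ps^2 k^{b-a}$ is decreasing in $k$, so $\max_{k \leq n} p_k\sigma_k^2 = ps^2$, while $s_n^2 \geq n \to \infty$. Theorem \ref{thm:QEst} with $L = 0$ then delivers AN; WC follows because $\frac{1}{n^2}\sum p_k\sigma_k^2 = n^{-1}\cdot o(1) \to 0$ gives (\ref{ConA}).

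Case (3) is the most delicate. With $a = b$, $p_k \sigma_k^2 = ps^2$ is constant in $k$, so $\frac{1}{n}\sum p_k \sigma_k^2 = ps^2$, giving $L = ps^2$ in hypothesis (1) of Theorem \ref{thm:QEst}; hypotheses (2) and (3) continue to hold (now using $a = b \geq 1$). Theorem \ref{thm:QEst} then delivers the claimed Kolmogorov bound $ps^2/(1+ps^2)$, and $\frac{1}{n^2}\sum p_k\sigma_k^2 = ps^2/n \to 0$ gives WC. The step I expect to be the main obstacle is showing that $\overline{X}_n$ genuinely fails to be asymptotically normal, since this requires the converse direction rather than just an upper bound. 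For this I invoke the equality part of Theorem \ref{thm:MainLin2}: its monotonicity hypothesis (3) is immediate, and (4) follows from $\liminf n^{-1}\sigma_n^2 \geq s^2 > 0$ because $b \geq 1$. Hence $\mathrm{Lin}\bigl(\{\frac{1}{s_n}(X_k-\mu)\}\bigr) = ps^2/(1+ps^2) > 0$, so Lindeberg's condition is violated. Because hypothesis (3) of Theorem \ref{thm:QEst} coincides with (\ref{ConC}), Theorem \ref{thm:FelNeg} guarantees Feller's condition, and the contrapositive of the equivalence in Theorem \ref{CLT} then forces $\frac{n}{s_n}(\overline{X}_n - \mu)$ to fail weak convergence to $\xi$, completing case (3).
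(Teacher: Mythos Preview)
Your proposal is correct and follows essentially the same route as the paper's own proof: case~(1) via condition~(\ref{ConB}) and Theorem~\ref{thm:CEst}, case~(2) via Theorem~\ref{thm:QEst} with $L=0$, and case~(3) via Theorem~\ref{thm:QEst} for the Kolmogorov bound together with the equality part of Theorem~\ref{thm:MainLin2}, Theorem~\ref{thm:FelNeg}, and the Feller-side equivalence in Theorem~\ref{CLT} to deduce failure of asymptotic normality. If anything, you verify the hypotheses of Theorems~\ref{thm:QEst} and~\ref{thm:MainLin2} more explicitly than the paper does.
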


\begin{proof}[Proof of Theorem \ref{thm:Example}]
Firstly, suppose that $b < 1$. Now, by (\ref{eq:sngeqn}),
\begin{equation*}
\frac{1}{s_n^2} \max_{k=1}^n \sigma_k^2 = \frac{n^b}{s_n^2} \leq n^{b - 1}, 
\end{equation*}
which clearly converges to $0$ as $n$ tends to $\infty$. Thus condition (\ref{ConB}) is satisfied, which allows us to conclude from Theorem \ref{thm:CEst} that $\overline{X}_n$ is AN. Also, Remark \ref{rem:AImB} shows that condition (\ref{ConA}) holds, whence we infer from Theorem \ref{thm:WeakCon} that $\overline{X}_n$ is WC for $\mu$. This establishes the first assertion.

Next, consider the case where $b \geq 1$ and $a > b$. Then the sequence
\begin{equation*}
p_k \sigma_k^2 = p s^2 k^{b - a}
\end{equation*}
converges to $0$ as $k$ tends to $\infty$, whence 
\begin{equation*}
\lim_{n \rightarrow \infty} \frac{1}{n} \sum_{k = 1}^n p_k \sigma_k^2 = 0.
\end{equation*}
Now it easily follows from Theorem \ref{thm:WeakCon} that $\overline{X}_n$ is WC for $\mu$ and from Theorem \ref{thm:QEst} that
\begin{equation*}
\limsup_{n \rightarrow \infty}K\left(\xi,\frac{n}{s_n} \left(\overline{X}_n - \mu\right)\right) = 0.
\end{equation*}
Put otherwise, $\overline{X}_n$ is AN and the second assertion is proved. 

Finally, let $b \geq 1$ and $a = b$. Then 
\begin{equation*}
\frac{1}{n} \sum_{k = 1}^n p_k \sigma_k^2 = p s^2.
\end{equation*}
Then, by Theorem \ref{thm:WeakCon}, $\overline{X}_n$ is WC for $\mu$. Furthermore, by Theorem \ref{thm:FelNeg}, Feller's condition is satisfied, and, by Theorem \ref{thm:MainLin2}, the Lindeberg index is $\frac{ps^2}{1 + ps^2}$. Thus, by Theorem \ref{CLT}, $\overline{X}_n$ fails to be AN. However, by Theorem \ref{thm:QEst}, the desired inequality in the third assertion holds.
\end{proof}

\begin{figure}[!htb]
  \centering
    \includegraphics[scale=0.7]{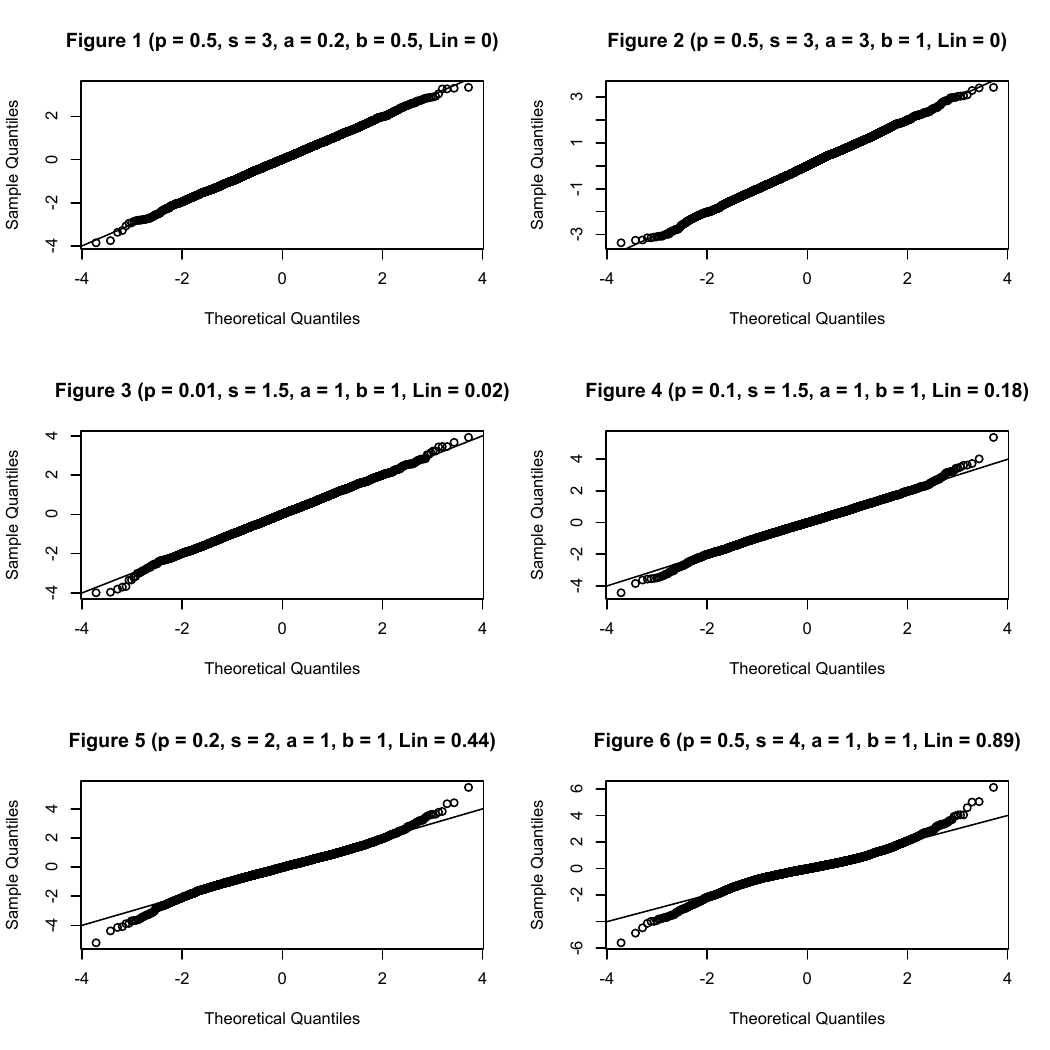}
\end{figure}

In order to illustrate Theorem \ref{thm:Example}, we have conducted a simulation study with the following setup. For specific instances of $p, s, a, b$ we have created an empirical cdf $\mathcal{E}$ for $\frac{\overline{X}_n - \mathbb{E}[\overline{X}_n]}{\sqrt{\Var(\overline{X}_n)}}$ with sample size $n = 1000$. In each case the empirical cdf was based on 5000 simulations. We have tested for asymptotic normality by creating a QQ-plot the graph of which contains bullets with coordinates $(\Phi^{-1}(t),\mathcal{E}^{-1}(t))$, where $\Phi$ is the cdf of a standard normal distribution, $\mathcal{E}$ is the empirical cdf, and $t$ runs over a specific grid from $0$ to $1$. If a bullet $(\Phi^{-1}(t),\mathcal{E}^{-1}(t))$ is close to the line $y = x$, then $\Phi^{-1}(t) \approx \mathcal{E}^{-1}(t)$, whence $\mathcal{E}(\Phi^{-1}(t)) \approx t = \Phi(\Phi^{-1}(t))$. Thus on each QQ-plot we have also added the graph of the line $y = x$. To each figure we have added the value of the Lindeberg index governing the asymptotic normality of the sample mean. Recall that the Lindeberg index always takes values between 0 and 1.

The following conclusions can be drawn from this study.

If $b < 1$, then the first assertion in Theorem \ref{thm:Example} states that - even if $p$ and $s$ are large and $a$ is below $b$ - the sample mean is asymptotically normal because the Lindeberg index is 0. This is confirmed by Figure 1.

If $b \geq 1$ and $a > b$, then the second assertion in Theorem \ref{thm:Example} states that - even if $p$ and $s$ are large - the sample mean is asymptotically normal because the Lindeberg index is 0. This is confirmed by Figure 2.

If $b \geq 1$ and $a = b$, then the third assertion in Theorem \ref{thm:Example} provides an upper bound for a canonical measure of the asymptotic normality of the sample mean because the Lindeberg index is $\frac{p s^2}{1 + p s^2}$. The larger the Lindeberg index, the more deviation from asymptotic normality could be seen. This is confirmed by Figures 3, 4 and 5.

\section{Open questions}

We formulate some open questions which could be a source for future research.

{\bf Question 1.} Theorem \ref{thm:Example} does not handle the case where $b \geq 1$ and $a < b$. Assume without loss of generality that $\mu = 0$. Then, arguing analogously as in the proof of Theorem \ref{thm:AsymNorEasy}, we easily see that 
 \begin{equation*}
\textrm{\upshape{Lin}}\left(\left\{\frac{1}{s_n} X_k\right\}\right)
= \sup_{\epsilon > 0} \limsup_{n \rightarrow \infty} \frac{1}{s_n^2}  \sum_{k = 1}^n \mathbb{E}\left[X^2 ; \left|X\right| \geq \frac{\epsilon s_n}{\sigma_k}\right],
\end{equation*}
$X$ being a random variable with cumulative distribution function $F$ and
\begin{equation*}
\sigma_k^2 = s^2 k^b
\end{equation*}
and
\begin{equation*}
s_n^2 = n - p \sum_{k = 1}^n  k^{- a} + p s^2 \sum_{k = 1}^n k^{b - a}.
\end{equation*}
It would be of interest to examine the existence of a more explicit formula for the Lindeberg index in this case. Also, the weak consistency should be investigated.\\

{\bf Question 2.} Strictly speaking, inequality (\ref{ineq:Main}) only shows that the Lindeberg index is an upper bound for a natural index measuring the asymptotic normality of the sample mean. This allows us to draw the conclusion that the sample mean is close to being asymptotically normal when the Lindeberg index is small, but we cannot say anything about what happens when the Lindeberg index is large. However, our simulation study empirically reveals that when the Lindeberg index gets larger, the sample mean tends to deviate more from asymptotic normality. It would be of interest to establish a useful lower bound for $\limsup_n K\left(\frac{n}{s_n} \left(\overline{X}_n - \mu\right) \rightarrow \xi\right)$ in terms of the Lindeberg index, which serves as a theoretical underpinning of this observation. General lower bounds of this type have been obtained by Berckmoes et al. (2013), but they are so unsharp that they do not have the power to predict what we have seen in our simulation study.


\begin{thebibliography}{99}
\bibitem[BH05]{Barbour} A.D. Barbour and L.H.Y. Chen, {\em An introduction to Stein's method} (Singapore University Press, Singapore, 2004).
\bibitem[BLV11]{BLV11An} B. Berckmoes, R. Lowen, and J. Van Casteren, ``Distances on probability measures and random variables," J. Math. Anal. Appl. 374, no. 2, 412--428 (2011).
\bibitem[BLV13]{BLV13} B. Berckmoes, R. Lowen, and J. Van Casteren, ``An isometric study of the Lindeberg-Feller CLT via Stein's method" J. Math. Anal. Appl. 405, no. 2, 484-498 (2013).
\bibitem[F71]{Fel} W. Feller, {\em An introduction to probability theory and its applications Vol. II} (John Wiley \& Sons, New York-London-Sydney, 1971).
\bibitem[HR09]{HR09} P. J. Huber and E.M. Ronchetti, {\em Robust statistics} (John Wiley \& Sons, Hoboken, NJ, 2009).
\bibitem[L15]{LoIn} R. Lowen, {\em Index analysis. Approach theory at work} (Springer, London, 2015).
\bibitem[TSM85]{TSM85} D. M. Titterington, A. F. M. Smith, and U. E. Makov, U. E. {\em Statistical analysis of finite mixture distributions} (John Wiley \& Sons, Chichester. 1985).
\end{thebibliography}
\end{document}